\title{Inadmissibility of the best equivariant predictive density in the unknown variance case}
\author{A. BOISBUNON and Y. MARUYAMA\\
Center for Spatial Information Science, The University of Tokyo\\ Kashiwa-shi, 277-8568 Chiba, Japan \\
\texttt{aboisbunon@gmail.com} \texttt{maruyama@csis.u-tokyo.ac.jp}}
\date{}
\def\esp{\operatorname{\textit{E}}} 
\def\KLrisk{\ensuremath{R_{KL}}} 
\def\d{\ensuremath{d}} 
\def\n{\ensuremath{n}} 
\def\s{\ensuremath{s}} 
\def\x{\ensuremath{x}} 
\def\xmean{\bar{\x}} 
\def\xnn{\xmean_{\n}} 
\def\xn1{\xmean_{\n+1}} 
\def\snn{\s_{\n}} 
\def\sn1{\s_{\n+1}} 
\def\y{\ensuremath{y}} 
\def\z{\ensuremath{z}} 
\def\Mu{\ensuremath{\mu}} 
\def\mN{\ensuremath{N}} 
\def\dbR{\mathbb{R}} 
\newcommand{\idmat}{{I}} 
\def\pred{\hat{p}}  
\def\bestEquiv{\pred_R} 
\def\BayesUs{\pred_{\rm GM}} 
\def\constUs{c_{\rm GM}} 
\def\ourprior{\pi_{\rm GM}} 
\def\JN{\ensuremath{J}_{\n+1}} 
\def\JD{\ensuremath{J}_{\n}} 
\def\uN{\ensuremath{u}_{\n+1}} 
\def\uD{\ensuremath{u}_{\n}} 
\def\priorFunc{\widetilde{\pi}} 
\def\vmpi{\rho_\pi} 
\theoremstyle{plain}
\newtheorem{theorem}{Theorem}[section]
\newtheorem{lemma}{Lemma}
\begin{document}

%
%


\maketitle

%
%


\section{Introduction} \label{sec:intro}

The most natural way to estimate an unobserved quantity $\Mu$ is to use observed averages. 
However, \cite{stein1955inadmissibility} demonstrated the inadmissibility of such estimators when $\Mu\in\dbR^\d$ with $\d\ge 3$; that is, he showed that there exist better estimators. 
The notion of better estimator means here that it has a lower quadratic risk $R_Q(\Mu,\hat{\Mu})=\esp_{\Mu}(\|\hat{\Mu}-\Mu\|^2)$, where $\hat\Mu$ is an estimator of $\Mu$ and the expectation is taken with respect to random quantities used in the construction of $\hat\Mu$. 
This phenomenon, the Stein effect, was first brought to light in the context of parameter estimation. 
For such a problem, many classes of estimators dominating the average have been proposed. 

In parallel, a similar phenomenon has been observed for predictive density estimation. 
For instance, let us observe $\n$ independent $\d$-dimensional vectors $\x_1,\dots,\x_\n$, each supposedly normally distributed, $\x_i\sim\mN_\d(\Mu,\sigma^2\,\idmat_\d)$ $(i=1,\dots, \n)$, where the common mean $\Mu$ is unknown, and the covariance matrix $\sigma^2\,\idmat_\d$ is assumed known. 
The aim is to estimate the density of a future vector $\y$, which is also assumed to be normal with same mean and variance, $\y\sim\mN_\d(\Mu,\sigma^2\,\idmat_\d)$. 
In this context, the mean $\xmean=\n^{-1}\sum_{i=1}^\n \x_i$ is a sufficient statistic used for the estimation of the predictive density by a function $\pred(\y\mid\xmean)$. 
The quality of such an estimator is often measured by the Kullback--Leibler risk 
\begin{equation*}
  \KLrisk(\Mu,\pred) = \int \phi(\xmean;\Mu,\sigma^2/\n)\!\int\! \phi(\y;\Mu,\sigma^2)\log\frac{\phi(\y;\Mu,\sigma^2)}{\pred(\y\mid\xmean)}d\y  d\xmean,
\end{equation*}
where $\phi(\cdot;\theta,\tau)$ is the multivariate Gaussian density with mean vector $\theta$ and covariance matrix $\tau\idmat_\d$. 
The most natural way to estimate $\phi(\y;\Mu,\sigma^2)$ is to just plug in an estimator of $\Mu$, yielding $\pred(\y\mid\xmean)=\phi\{\y;\Mu=\hat\Mu(\xmean),\sigma^2\}$. 
However, \cite{aitchison1975goodness} proved that plug-in densities are uniformly dominated under the Kullback--Leibler risk by the best equivariant predictive density, which is a Bayesian predictive density taken with respect to the uniform prior $\pi_U(\Mu)=1$, and is given by $\pred_U(\y\mid\xmean) = \phi\{\y;\xmean,(\n+1)\sigma^2/\n\}$. 
\cite{george2006improved} have shown that predictive density estimation and parameter estimation are closely related. 
They argue that the best equivariant predictive density $\pred_U(\y\mid\xmean)$ shares many properties with the maximum likelihood estimator: minimaxity, invariance, constant risk, and inadmissibility for dimension greater or equal to three. 
Hence, it should be taken as the reference estimator. 
\cite{komaki2001shrinkage} showed that, when $\d\ge 3$, it is dominated by the Bayesian predictive density with respect to the harmonic prior $\pi_H(\Mu)= \|\Mu\|^{-(\d-2)}$, also known as the shrinkage prior.

A more interesting and practical setting is when the variance $\sigma^2$ is unknown. 
In such a case, the sufficient statistic is $(\xmean,\s)$,
where $\s=\sum_{i=1}^\n\|\x_i-\xmean\|^2$ estimates the variance up to a factor.
The Kullback--Leibler risk becomes
\begin{equation*} 
  \KLrisk\{(\Mu,\sigma^2),\pred\} = \iint \! \phi(\xmean;\Mu,\sigma^2\!/\n)\frac{\gamma\{\s/\sigma^2;(\n-1)\d\}}{\sigma^{2}}\!\int\! \phi(\y;\Mu,\sigma^2)\log\frac{\phi(\y;\Mu,\sigma^2)}{\pred(\y\mid\xmean,\s)}d\y d\xmean d\s,
\end{equation*}
where $\gamma(\cdot;\upsilon)$ is the chi-square density with  degrees of freedom $\upsilon$. 
The best equivariant predictive density is now taken with respect to the right invariant prior $\pi_R(\Mu,\sigma) = 1/\sigma$ and is the Student $t$-distribution \citep{liang2004exact}
\begin{equation*} 
\bestEquiv(\y\mid\xmean,\s) = c_R
\left(\frac{1}{\pi\s}\,\frac{\n}{\n+1}\right)^{\d/2}\left(1+\frac{1}{\s}\frac{\n}{\n+1}\|\y-\xmean\|^2 \right)^{-\n\d/2},
\end{equation*}
where $c_R = \Gamma(\n\d/2)/\Gamma\{(\n-1)\d/2\}$ and $\Gamma(\cdot)$ denotes the Gamma function. 
So far, there are very few results on admissibility of $\hat{p}_R$. 
Three papers propose extensions to the harmonic prior $\pi_H$, and have influenced the present paper. 
The first is due to \cite{kato2009improved}, who showed that, for $\d\ge 3$ and under the Kullback--Leibler risk, the best equivariant density $\bestEquiv$ is dominated by the Bayesian predictive density with respect to the prior
\begin{equation}\label{eq:kato}
  \pi_{K}(\Mu,\eta)  =  \int_{0}^{1}\phi\left\{\Mu ; 0,\eta^{-1}\lambda^{-1}(1-\lambda)\right\}\pi(\eta,\lambda)d\lambda , \quad \pi(\eta,\lambda)  =  \eta^{-2}\lambda^{-2}1_{(0,1)}(\lambda),
\end{equation}
where $\eta=1/\sigma^2$ and $1_{A}(x)=1$ if $x\in A$ and $0$ otherwise.
\cite{komaki2006shrinkage,komaki2007bayesian} considered a prior based on the Green's function of the manifold of the unknown variance model, which is a hyperbolic plane. 
In an asymptotic framework, he showed the inadmissibility of the best equivariant predictive density under the Kullback--Leibler risk even when $\d<3$.
Finally, \cite{maruyama2012bayesian} uses an extension of the prior $\pi_{K}$ given by equation~\eqref{eq:kato} with
\begin{equation}\label{eq:MS12}
  \pi(\eta,\lambda) = \eta^{a}\lambda^{a}(1-\lambda)^b\,1_{(0,1)}(\lambda),
\end{equation}
where $a$ and $b$ are constants.
This prior leads to an estimator of the density dominating the best equivariant density also in the low dimensional case, where $\d=1$ or 2. 
Unlike \cite{komaki2007bayesian}'s result, the latter one was obtained in a nonasymptotic framework, and under the risk 
\begin{equation*} 
  R_1\{(\Mu,\sigma^2),\pred\} = \iint \! \phi(\xmean;\Mu,\sigma^2\!/\n)\frac{\gamma\{\s/\sigma^2;(\n-1)\d\}}{\sigma^{2}}\!\int\! \pred(\y\mid\xmean,\s)\log\frac{\pred(\y\mid\xmean,\s)}{\phi(\y;\Mu,\sigma^2)}d\y d\xmean d\s,
\end{equation*}
which is denoted by $R_{1}$ because it corresponds to Csisz\'ar's $\alpha$-divergence with $\alpha=1$. 

In the present work, we consider the Kullback--Leibler risk, which is another special case of Csisz\'ar's $\alpha$-divergence with $\alpha=-1.$ 
In the context of unknown variance we show that the inadmissibility of the best equivariant predictive density $\bestEquiv$ for any $\d$, even $\d=1$ or 2, remains true under the Kullback--Leibler risk, hence extending \cite{kato2009improved}'s result. 
Furthermore, we partially solve Problem 2-2 stated by \cite{maruyama2012bayesian}, namely, ``Under $\d=1,2$ and the $R_\alpha$-risk with $-1\le \alpha<1$, does the best invariant predictive density keep inadmissibility? If so, which Bayesian predictive densities improve it?''.
We consider an extension of the harmonic prior different from that of \cite{komaki2007bayesian}, but the major difference lies in the fact that we have nonasymptotic results. Such results are derived under the Gaussian assumption, whereas \cite{komaki2007bayesian} considered a more general distribution.
Finally, we establish a preliminary basis for comparing the estimation and prediction problems for the unknown variance setting, in a similar fashion to \cite{george2006improved}. Such a comparison is however 
much more difficult to prove formally than for the known variance case, thus remaining an open problem.


\section{The Bayesian predictive density under unknown variance} \label{sec:expr}

We are interested in estimating the predictive density of
\begin{equation}\label{eq:gauss_y}
  \y \sim \mN_\d(\Mu, \sigma^{2}\,\idmat_\d),
\end{equation}
based on the observations $\x_1,\dots,\x_\n$, where $\x_i\sim\mN_\d(\Mu,\sigma^2\,\idmat_\d)$.

In this section, we aim at extending the result of \cite{george2006improved} expressing a Bayesian predictive density with respect to a prior $\pi$ as a function of the best equivariant density, i.e.,
\begin{displaymath}
  \hat{p}_\pi(\y\mid\xnn) = \frac{m_\pi\{\xn1;(\n+1)^{-1}\eta^{-1}\}}{m_\pi(\xnn ;\n^{-1}\eta^{-1})}\;\hat{p}_U(\y\mid\xnn), \quad
  m_\pi(\z;\sigma^2) = \int \phi\left(\z;\Mu, \sigma^2\right)\pi(\Mu)d\Mu,
\end{displaymath}
where $\xnn=\xmean$, $\xn1 = (\n \xnn + \y)/(\n+1)$ and $\eta=1/\sigma^2$.
Theorem \ref{th:expr_unknownVar} gives a similar expression for the unknown variance case, relying on both the sufficient statistic $(\xmean,\s)$ and the statistic $(\xn1,\sn1)$, with
\begin{equation*} 
  \xn1 \sim \mN_d\left\{\Mu,(\n+1)^{-1}\eta^{-1}\idmat_\d\right\}, \quad
  \sn1 = \snn + \n\|\y-\xnn\|^2/(\n+1)\sim \eta^{-1}\chi_{\n\d}^2,
\end{equation*}
where $\snn=\s.$
The marginal density $m_\pi(\z,v;l)$ is defined for $\z\sim\mN_\d\{\Mu,(l \eta)^{-1}\idmat_\d\}$ and $v\sim \eta^{-1}\chi_{(l-1)\d}^2$ by
\begin{equation*}
  m_\pi(\z,v;l)  =  \iint \phi\left(\z;\Mu,l^{-1}\eta^{-1}\right) \eta\, \gamma\left\{\eta v;(l-1)\d/2\right\} \pi(\Mu,\eta)d\Mu\, d\eta. 
\end{equation*}
\begin{theorem}\label{th:expr_unknownVar}
  For any prior $\pi(\Mu,\eta)$, the Bayesian predictive density can be expressed as
  \begin{equation*}
    \hat{p}_\pi(\y\mid\xnn,\snn) 
    = \frac{\vmpi(\xn1,\sn1;\n+1)}{\vmpi(\xnn,\snn;\n)}
    \, \hat{p}_R(\y\mid\xnn,\snn),
  \end{equation*}
  where $\hat{p}_R(\y\mid\xnn,\snn)$ is the best equivariant predictive density and $\vmpi= v \, m_\pi(\z,v;l)$.
  Furthermore, the difference between the Kullback--Leibler risks of  $\hat{p}_\pi$ and $\bestEquiv$ is 
  \begin{equation*}
    R_{KL}\{(\Mu,\eta),\hat{p}_R\}-R_{KL}\{(\Mu,\eta),\hat{p}_\pi\} =   \esp_{(\Mu,\eta)}\!\left\{\log\frac{\vmpi(\xn1,\sn1;\n+1)}{\rho_\pi(\xnn,\snn;\n)} \right\}\! ,
  \end{equation*}
  where $\esp_{(\Mu,\eta)}$ denotes the expectation with respect to $(\y,\xmean,\s)$, provided the expectations exist.
\end{theorem}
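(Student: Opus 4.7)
The plan is to mirror the known-variance argument of \cite{george2006improved}, but to absorb the unknown precision $\eta$ into a joint $(\Mu,\eta)$-marginal. First, I would write the Bayesian predictive density as the ratio
\begin{equation*}
\hat p_\pi(\y\mid\xnn,\snn) = \frac{\iint\phi(\y;\Mu,\eta^{-1})\,\phi(\xnn;\Mu,(\n\eta)^{-1})\,\eta\,\gamma(\eta\snn;(\n-1)\d)\,\pi(\Mu,\eta)\,d\Mu\,d\eta}{m_\pi(\xnn,\snn;\n)},
\end{equation*}
so that the denominator is already in the required form and only the numerator has to be reorganized.

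The heart of the proof is refactoring the numerator so that the integrand becomes the one defining $m_\pi(\xn1,\sn1;\n+1)$, up to factors that do not depend on $(\Mu,\eta)$. I would complete the square in $\Mu$ in the product of Gaussians, using the identity
\begin{equation*}
\n\|\xnn-\Mu\|^2+\|\y-\Mu\|^2=(\n+1)\|\xn1-\Mu\|^2+\frac{\n}{\n+1}\|\y-\xnn\|^2,
\end{equation*}
together with $\sn1-\snn=\n\|\y-\xnn\|^2/(\n+1)$, to obtain
\begin{equation*}
\phi(\y;\Mu,\eta^{-1})\,\phi(\xnn;\Mu,(\n\eta)^{-1}) = \phi\{\xn1;\Mu,((\n+1)\eta)^{-1}\}\cdot A(\y,\xnn,\eta),
\end{equation*}
with $A$ free of $\Mu$. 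I would then gather $A$ together with the chi-square density $\eta\,\gamma(\eta\snn;(\n-1)\d)$ and reorganize the resulting monomial in $\eta$ so that it matches the chi-square density $\eta\,\gamma(\eta\sn1;\n\d)$ appearing in $m_\pi(\xn1,\sn1;\n+1)$. The $(\Mu,\eta)$-free multiplier left behind is a direct function of $(\y,\xnn,\snn)$; a short calculation shows that it equals exactly $(\sn1/\snn)\,\bestEquiv(\y\mid\xnn,\snn)$, with the constant $c_R=\Gamma(\n\d/2)/\Gamma\{(\n-1)\d/2\}$ from $\bestEquiv$ arising as the ratio of gamma normalizing constants.

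Combining these steps, the numerator collapses to $(\sn1/\snn)\,\bestEquiv(\y\mid\xnn,\snn)\,m_\pi(\xn1,\sn1;\n+1)$, and dividing by $m_\pi(\xnn,\snn;\n)$ gives the first claim with $\vmpi(z,v;l)=v\,m_\pi(z,v;l)$. The risk identity then follows immediately: subtracting the two Kullback--Leibler risks cancels the $\log\phi(\y;\Mu,\eta^{-1})$ term, leaving $\esp_{(\Mu,\eta)}[\log\{\hat p_\pi(\y\mid\xnn,\snn)/\bestEquiv(\y\mid\xnn,\snn)\}]$, into which the first formula is substituted to produce the stated expression, provided the integrals exist.

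The main obstacle is the bookkeeping in the chi-square reorganization step. The emergence of the extra factor $\sn1/\snn$, and therefore the need to define $\vmpi=v\,m_\pi$ rather than simply $m_\pi$, stems from the jump in degrees of freedom from $(\n-1)\d$ to $\n\d$ when the data are augmented from $(\xnn,\snn)$ to $(\xn1,\sn1)$; tracking that all powers of $\eta$, $\snn$ and $\sn1$, together with the gamma constants, combine to reproduce exactly $\bestEquiv$ is routine but must be done with care.
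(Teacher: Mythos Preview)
Your proposal is correct and follows precisely the approach the paper intends: it explicitly frames Theorem~\ref{th:expr_unknownVar} as the unknown-variance analogue of the \cite{george2006improved} identity, and your argument---completing the square to pass from $(\xnn,\snn)$ to $(\xn1,\sn1)$, then matching the chi-square kernels so that the leftover factor is $(\sn1/\snn)\,\bestEquiv$---is exactly that extension. Your observation that the extra $\sn1/\snn$ forces $\vmpi=v\,m_\pi$ is the one nontrivial point beyond the known-variance case, and you have it right.
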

Unfortunately, we have not been able to link such an expression directly to the estimation problem, as was done by \cite{george2006improved}, but we believe that the link exists and in Section~\ref{sec:conclu} we give some leads to uncover it.


\section{Inadmissibility of the best equivariant density} \label{sec:main}

\subsection{Choice of prior}

The easiest way to show the inadmissibility of an estimator is to exhibit a dominating estimator. 
Below, we prove the inadmissibility of the best equivariant density by showing that it is dominated by the Bayesian predictive density with respect to the prior
\begin{equation} \label{eq:us}
  \ourprior(\Mu,\eta) = 
    \int_{0}^{1}\phi\left\{\Mu;0,\eta^{-1}\lambda^{-1}(1-\lambda)\right\} \pi(\eta,\lambda)d\lambda , \quad
    \pi(\eta,\lambda)  =  \eta^{a}\lambda^a\priorFunc(\lambda)\, 1_{(0,1)}(\lambda),
\end{equation}
where the subscript ${\rm GM}$ stands for Gaussian mixture. 
The function $\priorFunc(\lambda)$ will be specified later.
The priors studied by \cite{kato2009improved} and \cite{maruyama2012bayesian} are special cases of this class of priors with $a=-2$ and $\priorFunc(\lambda)=1$ and $\priorFunc(\lambda)=(1-\lambda)^b$. 

We separate two cases, the low-dimensional case where $\d\le 4$, and the higher-dimensional case where $\d\ge 3$. 


\subsection{Domination for the low dimensional case}
\label{sec:low_dim}
Setting $\nu=\d/2+a+1>0 $, we consider the following special case for the prior in \eqref{eq:us},
\begin{equation}\label{eq:choicePi}
  \priorFunc(\lambda) = (1-\lambda)^{(\n-1)\d/2-1} \left\{1-(\n-1)\lambda/\n\right\}^{-(\n-2)\d/2-\nu}, \quad \lambda\in(0,1).
\end{equation}
\begin{theorem}\label{main-thm}
  If $\d\le 4$ and $\n=2$, the best equivariant density $\bestEquiv$ is inadmissible under the Kullback--Leibler risk in the unknown variance setting.
  In particular, it is dominated by the Bayesian predictive density with respect to the prior $\ourprior$ where the function $\priorFunc$ is specified by equation~\eqref{eq:choicePi} with $0<\nu\leq \nu_*$, $\nu_*$ being constant. 
\end{theorem}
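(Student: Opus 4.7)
The plan is to apply Theorem~\ref{th:expr_unknownVar}, which reduces the claim of inadmissibility to showing
\begin{equation*}
\esp_{(\Mu,\eta)}\!\left\{\log\frac{\vmpi(\xn1,\sn1;\n+1)}{\vmpi(\xnn,\snn;\n)}\right\}\ge 0,
\end{equation*}
with strict inequality somewhere on the parameter space. The first task is to obtain an explicit expression for $\vmpi(\z,v;l)=v\,m_\pi(\z,v;l)$ under $\ourprior$. Since $\ourprior$ is a scale mixture of centered Gaussians in $\Mu$, the $\Mu$-integral is a Gaussian convolution yielding $\phi(\z;0,\eta^{-1}\tau_l(\lambda)\idmat_\d)$ with $\tau_l(\lambda)=\{\lambda+l(1-\lambda)\}/(l\lambda)$, after which the $\eta$-integral combines with the gamma density to produce a factor proportional to $\{v+\|\z\|^2/(2\tau_l(\lambda))\}^{-\alpha}$, with $\alpha=a+1+l\d/2$. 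Extracting the scaling in $v$ and $\|\z\|^2$ then yields
\begin{equation*}
\vmpi(\z,v;l)=c_l\,v^{-\nu}F_l(\|\z\|^2/v),\quad F_l(r)=\int_0^1\priorFunc(\lambda)\,\lambda^a\,\tau_l(\lambda)^{-\d/2}\{1+r/(2\tau_l(\lambda))\}^{-\alpha}d\lambda,
\end{equation*}
where $\nu=\d/2+a+1$ and $c_l$ is an explicit normalizing constant.

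Next I would insert the specific $\priorFunc$ from~\eqref{eq:choicePi}, whose exponents are tuned so that at $\n=2$ the $\d$-dependence in the factor $\{1-(\n-1)\lambda/\n\}^{-(\n-2)\d/2-\nu}$ collapses and the product $\priorFunc(\lambda)\lambda^a\tau_l(\lambda)^{-\d/2}$ simplifies; a substitution such as $t=l\lambda/\{\lambda+l(1-\lambda)\}$ should then convert $F_l(r)$ into an incomplete-beta or Gauss hypergeometric form in which the $\nu$-dependence is displayed explicitly. The log-ratio decomposes as
\begin{equation*}
\log\frac{\vmpi(\xn1,\sn1;\n+1)}{\vmpi(\xnn,\snn;\n)}=-\nu\log\frac{\sn1}{\snn}+\log\frac{F_{\n+1}(\|\xn1\|^2/\sn1)}{F_\n(\|\xnn\|^2/\snn)}+\log\frac{c_{\n+1}}{c_\n},
\end{equation*}
whose last term is a harmless constant shift.

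The main obstacle is to show the expectation of the right-hand side is nonnegative for $0<\nu\le\nu_*$. Since $\sn1\ge\snn$, the first term contributes a loss of order $\nu$ under expectation, so all positive gain must come from the middle term. The approach is a small-$\nu$ perturbation argument: differentiating the log-ratio in $\nu$ inside the expectation reduces positivity to a comparison of two Laplace-type integrals, one built from $(\xn1,\sn1)$ and one from $(\xnn,\snn)$, under the joint law of $(\y,\xnn,\snn)$. Showing strict positivity of this derivative at $\nu=0^+$ is the principal technical work, and this is where the restrictions $\n=2$ and $\d\le 4$ enter: they keep the chi-square shape parameters $(\n-1)\d/2$ and $\n\d/2$ small enough that sharp closed-form estimates become available. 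Continuity in $\nu$ then defines $\nu_*>0$ as the threshold at which the estimate fails, and strict positivity at some parameter value follows from analyticity of the risk in $\Mu$.
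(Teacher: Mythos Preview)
Your overall architecture matches the paper's: invoke Theorem~\ref{th:expr_unknownVar}, compute $\rho_{\ourprior}$ explicitly, decompose the log-ratio, and exploit smallness of $\nu$. The paper's Lemma~\ref{sec:BayesPredDens}\ref{lem:BayesPredDens} delivers exactly the explicit form you are aiming for, with $J_l$ in place of your $F_l$.

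The gap is that you stop at the hard step. You identify ``strict positivity of the derivative at $\nu=0^+$'' as the principal technical work and then only assert that $\n=2$, $\d\le 4$ make closed-form estimates available. Moreover, your framing would mislead the calculation: the constant $\log(c_{\n+1}/c_\n)$ is not a harmless shift but is itself $O(\nu)$ and strictly positive---in the paper's notation it is $\log \constUs$ with $\constUs=(1+1/\n)^\nu B\{\nu,(\n-1)\d/2\}/B(\nu,\n\d/2)$---so the claim that ``all positive gain must come from the middle term'' is false. All three pieces contribute at order $\nu$ and must be compared simultaneously.

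The paper organizes this comparison concretely. It passes to the noncentral-Beta variables $u_l=w_l/(1+w_l)$ and regroups the log-ratio so that each piece is bounded below by a multiple of $\nu\,\esp(1-\uN)$: Lemma~\ref{sec:BayesPredDens}\ref{lem:risk_1} handles the $J$-ratio together with the Beta constant, yielding $\nu\,g(\n,\d,\nu)\,\esp(1-\uN)$, and Lemma~\ref{sec:BayesPredDens}\ref{lem:risk_2} handles the factor $(\n+1)\snn(1-\uN)/\{\n\,\sn1(1-\uD)\}$, yielding $-\nu\,h(\n,\d)\,\esp(1-\uN)$. The sign question then collapses to the purely numerical inequality $g(\n,\d,\nu)\ge h(\n,\d)$, settled by Lemma~\ref{sec:BayesPredDens}\ref{lem:nu_zero} for $\n=2$, $\d\le 4$, and $0<\nu\le\nu_*$. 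Your perturbation idea is morally the limit $g(\n,\d,0^+)>h(\n,\d)$, but without the $u_l$-reparametrization and the common factor $\esp(1-\uN)$ through which both bounds are expressed, it is unclear how you would actually carry out the comparison.
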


\begin{proof} 
The difference in Kullback--Leibler risks between a Bayesian predictive density $\pred_\pi$ and the best equivariant one is 
\begin{equation*} 
  R_{KL}\{(\Mu,\eta),\bestEquiv\}-R_{KL}\{(\Mu,\eta),\hat{p}_\pi\} = \esp_{(\Mu,\eta)}\!\left\{\log \frac{\hat{p}_\pi(\y\mid\xnn,\snn)}{\bestEquiv(\y\mid\xnn,\snn)} \, \right\}\!.
\end{equation*} 
From Lemma~\ref{sec:BayesPredDens}\ref{lem:BayesPredDens}, and for the prior $\ourprior$, where the function $\priorFunc$ is specified by \eqref{eq:choicePi}, this difference in risks is
\begin{align*}
  R_{KL}\{(\Mu,\eta),\bestEquiv\} - R_{KL}\{(\Mu,\eta),\BayesUs\} 
  &  = \esp_{(\Mu,\eta)}\!\bigg[\log \bigg\{c_B
  \left(\frac{\n+1}{\n}\right)^{\nu}  \left(\frac{\sn1}{\snn}\right)^{-\nu}\frac{\JN}{\JD}\bigg\}\bigg]\\
  &  = \esp_{(\Mu,\eta)}\!\left[\log \left\{c_B   \frac{\JN}{\JD} \left(\frac{1-\uD}{1-\uN }\right)^{\nu} \right\}\right]\\
  & \qquad +  \nu \esp_{(\Mu,\eta)}\!\bigg[\log \bigg\{
  \frac{\n+1}{\n} \left(\frac{\sn1}{\snn}\right)^{-1} \frac{1-\uN}{1-\uD}\bigg\}\bigg]\!,
\end{align*}
where $c_B = B\{\nu,(\n-1)\d/2\}/ B(\nu,\n\d/2)$, $u_l=l\|\xmean_l\|^2/(l\|\xmean_l\|^2+\s_l)$ and $J_l$ is given in Equation~\eqref{eq:Jl}, for $l\in\{\n,\n+1\}$.
By Lemmas~\ref{sec:BayesPredDens}\ref{lem:risk_1} and \ref{sec:BayesPredDens}\ref{lem:risk_2}, the risk difference is bounded by
\begin{equation*} 
  R_{KL}\{(\Mu,\eta),\bestEquiv\}- R_{KL}\{(\Mu,\eta),\BayesUs\} \geq \nu \esp( 1-\uN)\left\{g(\n,\d,\nu)-h(\n,\d)\right\}.
\end{equation*}
Since $\nu > 0$ and $1-\uN \ge 0$, the sign of the lower bound is determined by the sign of $\{g(\n,\d,\nu)-h(\n,\d)\}$.
Applying Lemma~\ref{sec:BayesPredDens}\ref{lem:nu_zero} completes the proof.
\end{proof}

Numerical computations give the following approximate values of the constant $\nu_*$: $\nu_*\approx 0.25$ for $\d=1$, $\nu_*\approx 0.33$ for $\d=2$, $\nu_*\approx 0.18$ for $\d=3$, and $\nu_*\approx 0.05$ for $\d=4$. 

Theorem~\ref{main-thm} solves Problem~2.2 stated by \cite{maruyama2012bayesian} for the Kullback--Leibler risk. 
A similar result has been obtained by \cite{komaki2007bayesian}
asymptotically with the number of observations, that is, when $\n\rightarrow\infty$, 
while here we give it in a nonasymptotic framework for 
$\n=2$, this restriction coming from Lemma~\ref{sec:BayesPredDens}\ref{lem:nu_zero}. 
From both results we conjecture that the inadmissibility also stands for $2<\n<\infty$.


\subsection{Domination in higher dimension}

The domination of the Bayesian predictive density $\BayesUs$ over the best equivariant one $\bestEquiv$ does not only occur when  $\d\le 4$. 
Indeed, the following result shows that the domination is still true in higher dimension $\d$, and larger number $\n$ of observations, when this time the function $\priorFunc$ is subject to the following condition, for $0<\nu\le \d/2-1$,
\begin{equation}\label{eq:cond_priorFunc}
  \left\{1-\n\lambda/(\n+1)\right\}^{\d/2-\nu-1}\le \priorFunc(\lambda)\le \left\{1-(\n-1)\lambda/\n\right\}^{\d/2-\nu-1}.
\end{equation}
In this setting, the inadmissibility of the best equivariant predictive density was proved by \cite{kato2009improved}. 
However, only one improving Bayesian predictive density had been provided, whereas we give a class of such dominating densities, including that studied by \cite{kato2009improved}.
\begin{theorem}\label{thm:high-dim}
  If $\d\ge 3$ and $\n\ge 2$, the best equivariant density $\bestEquiv$ is inadmissible under the Kullback--Leibler risk in the unknown variance setting.
  It is dominated by the Bayesian predictive density with respect to the prior $\ourprior$ where $\priorFunc$ is specified by \eqref{eq:cond_priorFunc}. 
\end{theorem}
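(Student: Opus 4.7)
I would mirror the blueprint of the proof of Theorem~\ref{main-thm}. Theorem~\ref{th:expr_unknownVar} reduces the risk difference to
\[
\esp_{(\Mu,\eta)}\!\left\{\log\frac{\vmpi(\xn1,\sn1;\n+1)}{\vmpi(\xnn,\snn;\n)}\right\}\!,
\]
so the task is to show that this expectation is nonnegative whenever $\priorFunc$ satisfies \eqref{eq:cond_priorFunc} with $0<\nu\le \d/2-1$.

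The first step is to make $\vmpi(\z,v;l)$ explicit for the Gaussian mixture prior \eqref{eq:us}. Integrating out $\Mu$ by normal--normal conjugacy and then $\eta$ against a gamma kernel collapses the double integral to a one--dimensional integral over $\lambda\in(0,1)$ whose integrand is $\priorFunc(\lambda)$ times an explicit kernel in $\lambda$, $u_l$, $\s_l$ and $l$. A crucial algebraic feature -- and indeed the source of the specific exponent $\d/2-\nu-1$ in \eqref{eq:cond_priorFunc} -- is that inserting $\priorFunc(\lambda)=\{1-(l-1)\lambda/l\}^{\d/2-\nu-1}$ makes the $\lambda$--integral collapse to a Beta function, so that $\vmpi(\z,v;l)$ acquires a closed form proportional to $\s_l^{-\d(l-1)/2-\nu}(1-u_l)^{-\nu}$ up to an explicit constant.

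The second step exploits the two--sided bound \eqref{eq:cond_priorFunc} asymmetrically: its lower half applied with $l=\n+1$ gives a lower bound for the numerator $\vmpi(\xn1,\sn1;\n+1)$, while its upper half applied with $l=\n$ gives an upper bound for the denominator $\vmpi(\xnn,\snn;\n)$. By construction both substitutions fall into the closed--form regime above, so the ratio is bounded below by an explicit expression of the form
\[
C(\n,\d,\nu)\,(\sn1/\snn)^{-\nu}\,\{(1-\uN)/(1-\uD)\}^{-\nu},
\]
which is exactly the quantity analysed in the proof of Theorem~\ref{main-thm}, and which at the endpoint $\nu=\d/2-1$, $\priorFunc\equiv 1$ reduces to the ratio handled by \cite{kato2009improved}.

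The last step is to show that the expected log of this lower bound is nonnegative. I would decompose it as in the proof of Theorem~\ref{main-thm} and invoke the same flavour of auxiliary lemmas: a Beta identity for the constant piece, and a harmonic--prior integration--by--parts in the spirit of \cite{stein1981estimation} and \cite{george2006improved} for the stochastic piece, which contributes nonnegatively precisely when $\d\ge 3$ -- explaining the dimensional restriction in the statement. The main obstacle I anticipate is pushing the argument through uniformly in $\n\ge 2$ and $\nu\in(0,\d/2-1]$: Theorem~\ref{main-thm} needed $\n=2$ for sharpness, and Kato's argument only covered the endpoint $\nu=\d/2-1$. I expect monotonicity of the Beta--ratio in $\nu$ to bridge these regimes, using Kato's already--verified endpoint as a baseline from which to extend the inequality to the whole interval.
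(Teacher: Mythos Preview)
Your asymmetric use of the two--sided bound \eqref{eq:cond_priorFunc} is exactly right, and so is the observation that the specific exponent $\d/2-\nu-1$ is engineered to cancel the factor $\{1+(l-1)\lambda\}^{\d/2-\nu-1}$ in the integrand of $J_l$ from Lemma~\ref{sec:BayesPredDens}\ref{lem:BayesPredDens}. But the next sentence is where the proposal breaks: after this cancellation the $\lambda$--integral does \emph{not} collapse to a complete Beta function. What remains is
\[
I_l=\int_0^1 t^{\nu-1}(1+w_l t)^{-(l-1)\d/2-\nu}\,dt
= w_l^{-\nu}\int_0^{u_l} z^{\nu-1}(1-z)^{(l-1)\d/2-1}\,dz,
\]
an \emph{incomplete} Beta integral with upper limit $u_l=w_l/(1+w_l)<1$. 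There is no closed form proportional to $\s_l^{-(l-1)\d/2-\nu}(1-u_l)^{-\nu}$; the residual factor is the Beta cumulative distribution function $F_l(u_l)$. Consequently your claimed lower bound $C(\n,\d,\nu)(\sn1/\snn)^{-\nu}\{(1-\uN)/(1-\uD)\}^{-\nu}$ is not what comes out, and the plan to recycle Lemmas~\ref{sec:BayesPredDens}\ref{lem:risk_1}--\ref{sec:BayesPredDens}\ref{lem:nu_zero} from the low--dimensional proof cannot be executed: those lemmas are tailored to the specific $\priorFunc$ in \eqref{eq:choicePi}, not to \eqref{eq:cond_priorFunc}, and the algebraic object you need to control here is genuinely different.

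The paper's route diverges from yours precisely at this point. Once $J_{\n+1}\ge I_{\n+1}$ and $J_\n\le I_\n$ are in hand, combining with $\constUs(\sn1/\snn)^{-\nu}$ and simplifying $w_l=l\|\bar x_l\|^2/\s_l$ yields the lower bound
\[
\nu\,\esp_{(\Mu,\eta)}\bigl(\log\|\xnn\|^{2}-\log\|\xn1\|^{2}\bigr)
\;+\;\esp\{\log F_{\n+1}(\uN)\}-\esp\{\log F_\n(\uD)\}.
\]
The first term is nonnegative by \cite{komaki2001shrinkage} (this is where $\d\ge 3$ enters), and the second is nonnegative by the comparison argument of \cite{kato2009improved} after a distributional rewriting of $u_l$ in terms of independent $\chi^2$ variables. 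No monotonicity--in--$\nu$ interpolation from Kato's endpoint is needed; the incomplete Beta term carries $\nu$ throughout and Kato's inequality applies for the full range $0<\nu\le \d/2-1$.
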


\begin{proof} 
Direct calculations show that the lower bound in \eqref{eq:cond_priorFunc} results in the inequalities where
\begin{equation}
  I_l = \int_0^1  t^{\nu-1}\left(1+w_l t\right)^{-(l-1)\d/2-\nu} dt= w_l^{-\nu}  \int_0^{u_l }z^{\nu-1}(1-z)^{(l-1)\d/2-1}dz, \label{eq:upper_JN_bigD}
\end{equation}
the latter equality deriving from the change of variables $z=w_l t/(1+w_l t)$, with $dt = w_l^{-1}(1-z)^{-2}dz$,
where $w_l=l\|\bar{x}_l\|^2/s_l$ and $u_l=w_l/(1+w_l)$.
Gathering those elements, the risk difference can thus be bounded as follows
\begin{equation*}
  \begin{split} 
    R_{KL}\{(\Mu,\eta),\bestEquiv\}- R_{KL}\{(\Mu,\eta),\BayesUs\}  & \geq  
    \nu\esp_{(\Mu,\eta)}\! \left( \log \left\|\xnn\right\|^{2}- \log\|\xn1\|^2 \right) + r_{\n+1} - r_\n, 
  \end{split}
\end{equation*}
where 
$r_l=\esp\{\log F_{l}(u_l)\}$, $l\in\{\n,\n+1\}$, with 
\begin{equation*}
  F_l(u)=\frac{1}{B\{\nu,(l-1)\d/2\}}\int_0^u t^{\nu-1}(1-t)^{(l-1)\d/2-1}dt, \quad u\in(0,1),
\end{equation*}
and the expectation being taken with respect to $u_l$.
This latter inequality is an equality for the prior $\pi_{K}$ considered by \cite{kato2009improved}. 
Based on \cite{komaki2001shrinkage}, 
we have that
\begin{equation*}
  \esp_{(\Mu,\eta)}\! \!\left( 
    \log \left\|\xnn\right\|^{2}- \log\|\xn1\|^2 \right) \ge 0.
\end{equation*}
Finally, after a few calculations, we obtain that
\begin{equation*}
  r_{\n+1} - r_\n \ge  \esp\!\left[ \log F_{\n+1}\!\left\{ \frac{\chi^2_\d(\n\xi)}{\chi^2_\d(\n\xi)+ \chi^2_{\n\d}} \right\}  \right] - \esp\!\left[ \log F_{\n}\!\left\{ \frac{\chi^2_\d(\n\xi)}{\chi^2_\d(\n\xi)+ \chi^2_{(\n-1)\d}} \right\}\right]\!.
\end{equation*}
From \cite{kato2009improved},
this latter lower bound is positive, 
thus completing the proof.
\end{proof}

The case $\d=\n=2$ makes the link between the two settings. 
Indeed, in such a case, the function $\priorFunc$ defined by equation~\eqref{eq:choicePi} and the upper bound given in equation~\eqref{eq:cond_priorFunc} both equal
$(1-\lambda/2)^{-\nu}$.

Finally, the prior  considered by \cite{maruyama2012bayesian} and given in \eqref{eq:MS12} also belongs to the same class of priors as those specified in the current section, and leads to improvement under the $R_1$-risk. 
This class of priors thus seems to be of special interest and may even include a universal prior leading to improvement for any dimension $\d$, number $\n$ of observations, and any loss. 
However, we have not yet been able to exhibit such a prior.


\section{Numerical experiment}
\label{sec:data}

In order to check the results of Theorem~\ref{main-thm}, we considered the example given by \cite{kato2009improved}. 
The experiment is driven for  values of the noncentral parameter $\xi=\eta\|\Mu\|^2$ taken in $[0,1\!~000]$, and values of the shape parameter $\nu$ taken in $\{0.05, 0.25, 0.50,\dots , 1.75, 2\}$.
We estimate the difference in risks over $r=5\!~000$ replicates of the different random variables involved and average the results over 10 trials of such an experiment.

Figure \ref{fig:lowdim} shows the behaviour of the estimated difference in risks as a function of $\xi$ and $\nu$ for each $\d\in\{1,2,3\}$ when $\n=2$. 
In this figure, the difference in risks is indeed positive for any $\xi$ and within the range defined by Theorem~\ref{main-thm} for $\nu$, thus confirming our theoretical results. 
What is perhaps more surprising is that the difference in risks is still positive for larger values of $\nu$. 
This is especially noticeable in the case $\d=3$, where the largest improvement seems to be obtained for $\nu\in [1,2]$, whereas in theory we could only prove domination when $\nu\le 0.18$.

\begin{figure}
  \centering
  {\small $\d=1$} \hfil \hfil {\small $\d=2$} \hfil \hfil {\small $\d=3$} \\
  \hspace*{-0.72cm}\includegraphics[width=.35\textwidth]{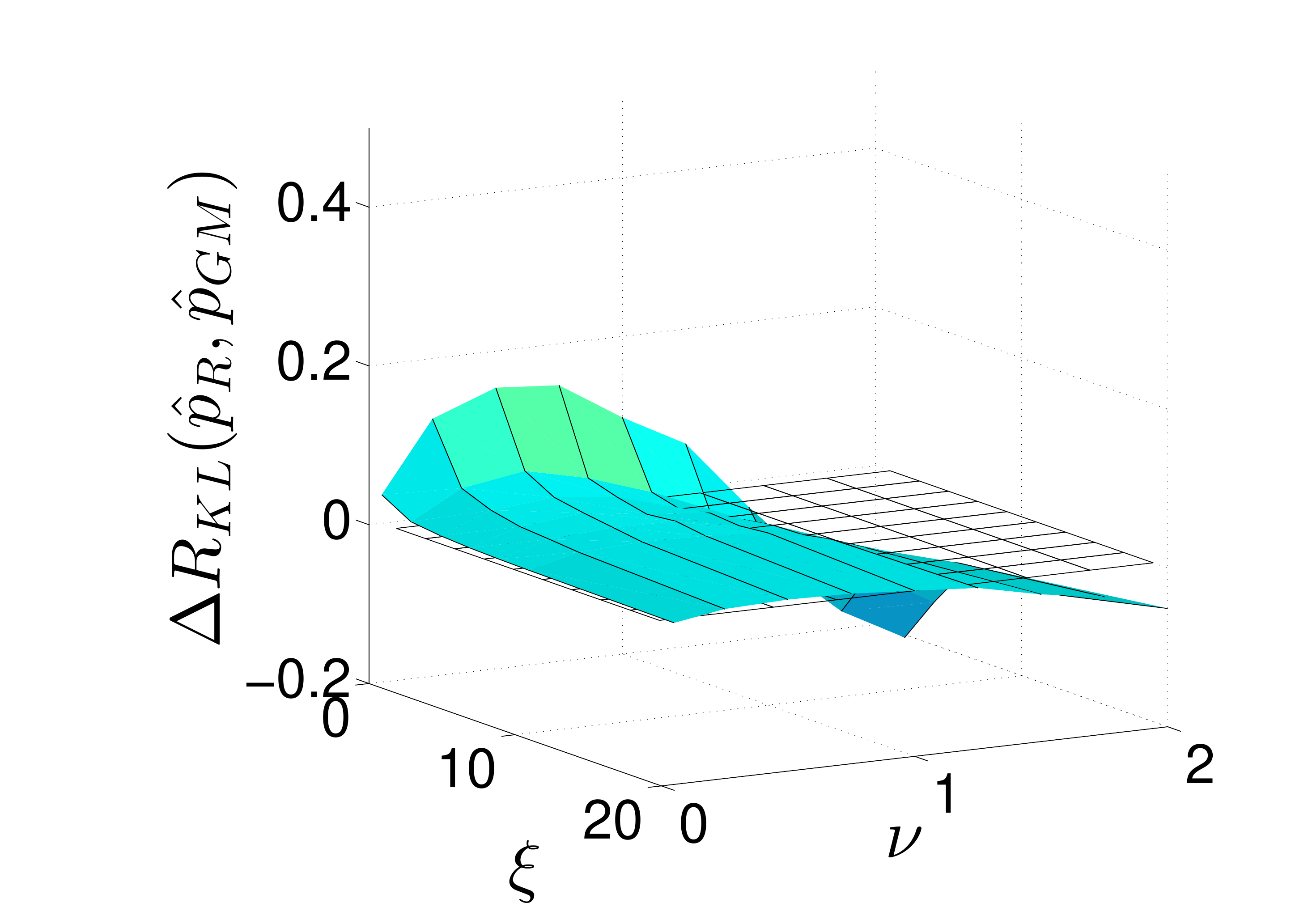}
  \hspace*{-0.78cm}\includegraphics[width=.35\textwidth]{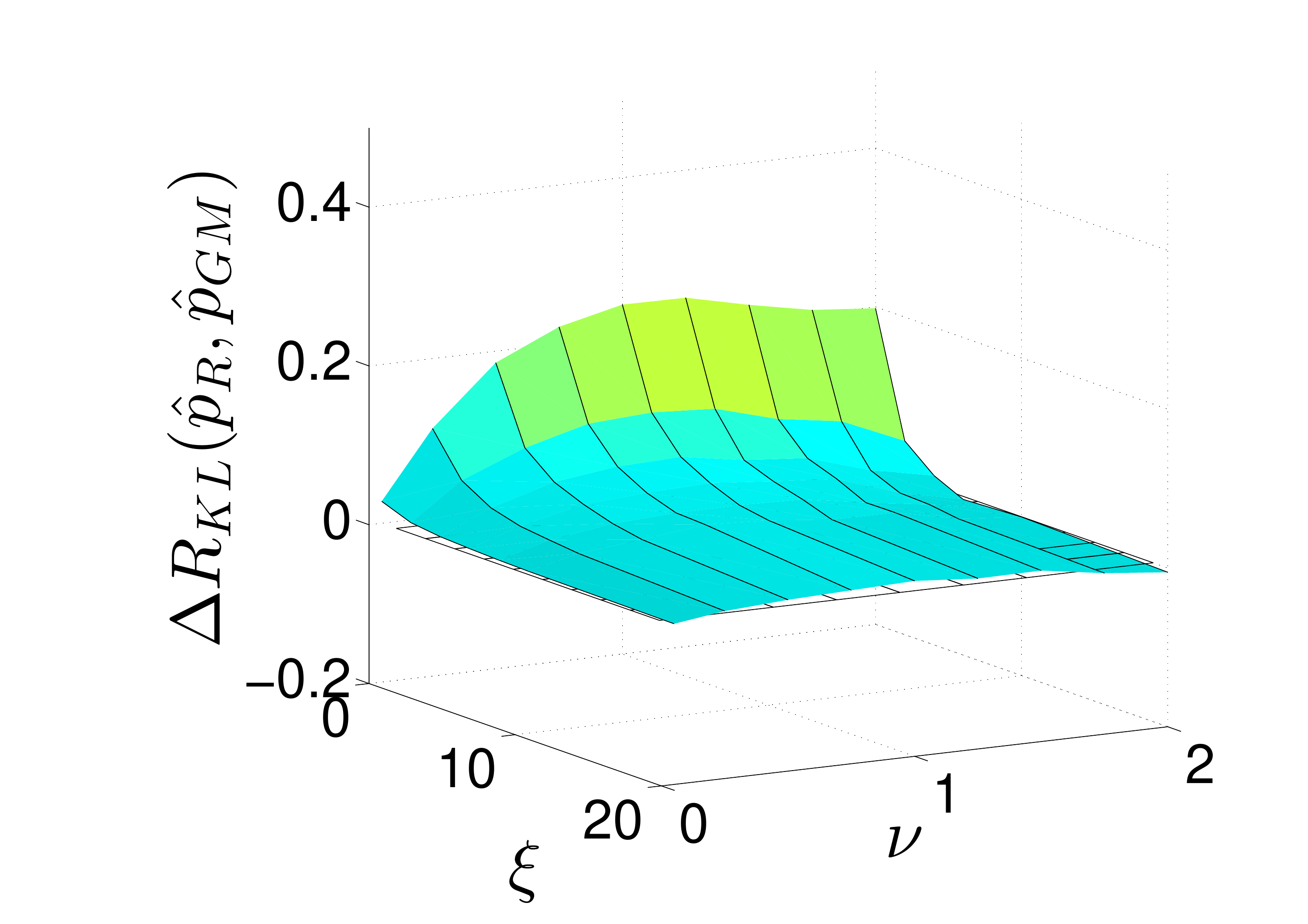}  
  \hspace*{-0.78cm}\includegraphics[width=.35\textwidth]{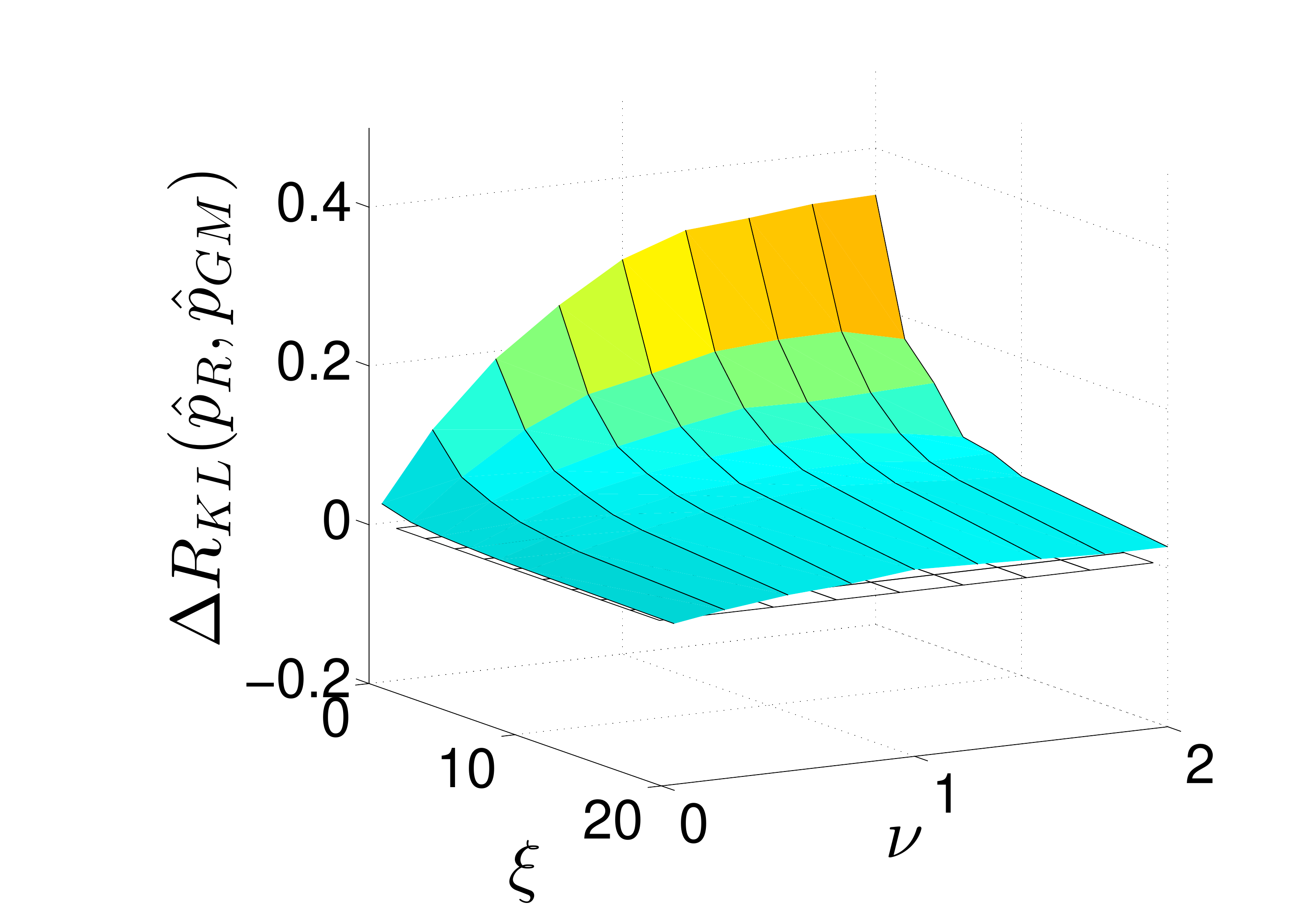}
  \caption{Difference in Kullback--Leibler risks as a function of the noncentrality parameter $\xi=\eta\|\Mu\|^2$ and of the shape parameter $\nu$, for $\d\le 3$ and $\n=2$. The black grid shows the plane $z=0$.}\label{fig:lowdim}
\end{figure}


\section{Discussion}
\label{sec:conclu}

Although we have not been able to prove it so far, it is our conviction that there exists a link between the prediction problem and the estimation problem in the unknown variance case, just as has been shown in the known variance case \citep{george2006improved}.
In order to make this clear, recall that \cite{brown1979heuristic} argues that some decision-theoretic properties do not seem to depend on the choice of the loss function. This claim concerns the estimation problem, but the work of \cite{maruyama2012bayesian}, of \cite{komaki2007bayesian} and the present work, showing inadmissibility of the best equivariant predictive density for any dimension 
under two different losses, seem to verify it for the prediction problem as well. Such a relationship is particularly appealing since it was proved by \cite{maruyama2012bayesian} that there exists a direct link between prediction under $R_1$-risk and simultaneous estimation of the mean $\Mu$ and the variance $\sigma^2$. Indeed, the authors show that, for $\pi$ given in equation~\eqref{eq:MS12},
\begin{equation*}
  R_1\{(\Mu,\sigma^2),\hat{p}_\pi\}=
  c_{IQ} \esp_{(\Mu,\sigma^2)}\!\left(\|\hat{\Mu}_\pi-\Mu\|^2/\sigma^2 \right) + c_S \esp_{(\Mu,\sigma^2)}\!\left\{ \hat{\sigma}_\pi^2/\sigma^2-\log(\hat{\sigma}_\pi^2/\sigma^2)-1 \right\}\! ,
\end{equation*}
where $c_{IQ}$ and $c_S$ are constants. 
In this expression, we easily recognise the invariant quadratic risk of $\hat{\Mu}_\pi$ and Stein's risk of $\hat{\sigma}_\pi^2$.
However, this equality follows from the fact that the Bayesian predictive density under the $R_1$-risk  reduces to a plug-in density, which is not the case here. Hence, it is not clear whether the link would still be possible under the Kullback--Leibler risk. 
Another reason for our conjecture is that several arguments given by \cite{george2006improved} are still true under the unknown variance setting: the unbiased estimator of $(\Mu,\sigma^2)$ is equal to the mean and variance of the best equivariant density, and both the unbiased estimators and the best equivariant distribution are minimax and inadmissible whatever the dimension is. 

Theorem~\ref{th:expr_unknownVar} gives an expression of the difference in risks, which can be rewritten as
\begin{equation*}
  R_{KL}\{(\Mu,\sigma^2),\hat{p}_R\}-R_{KL}\{(\Mu,\sigma^2),\hat{p}_\pi\}
  = \int_{\n}^{\n+1} \frac{\partial}{\partial l }\, Q(l,\Mu,\sigma^2)  dl,
\end{equation*}
where $Q(l,\Mu,\sigma^2) = \esp_{(\Mu,\sigma^2)}\{\log \rho_\pi(\z,v;l) \}
$ and $\rho_\pi(\z,v;l)=v\, m_\pi(\z,v;l)$. 
Hence, a sufficient condition for the difference in risks to be positive is that $\partial Q(l,\Mu,\sigma^2)/\partial l \ge 0$ for all $(\Mu,\sigma^2)$ and for $l\in\{\n,\n+1\}$. 
This type of expression is the key element that helped \cite{george2006improved} explicitely express the link with the estimation problem, but its derivation is much more difficult under the unknown variance setting.
It thus remains an open problem to find a similar result when the  variance is unknown.


\section*{Acknowledgement}
This work was supported by Japan Society for the Promotion of Science Postdoctoral Fellowship for Young Foreign Researchers and Japan Society for the Promotion of Science Kakenhi Grant.

\section*{Supplementary material}
\label{SM}
The proofs of Theorems~\ref{th:expr_unknownVar} and \ref{thm:high-dim} and of the Lemmas given in the Appendix are provided in Supplementary Material, along with details of the simulation study.


\appendix

\section{Lemmas used in the proof of Theorem~\ref{main-thm}}
\label{sec:BayesPredDens}

\begin{lemma}\label{lem:BayesPredDens}
  Under the assumptions of Model \eqref{eq:gauss_y}, the Bayesian predictive density with respect to the prior $\ourprior$ in \eqref{eq:us} is 
  \begin{displaymath}
    \BayesUs(\y\mid\xnn,\snn) = \constUs \left(\frac{\s_{\n+1}}{\s_\n}\right)^{-\nu} \frac{\JN}{\JD}\; \bestEquiv(\y\mid\xnn,\snn) ,
  \end{displaymath}
  where $\constUs=(1+1/\n)^\nu B\left\{\nu,(\n-1)\d/2\right\}/B\left(\nu,\n\d/2\right)$ is a constant and the term $J_l$, $l\in\{\n,\n+1\}$, is defined as
  \begin{equation}
    J_l = \int_0^1 \frac{\lambda^{\nu -1}\{1+(l-1)\lambda\}^{\d/2-\nu-1}}{\left(1+w_l\,\lambda\right)^{(l-1)\d/2+\nu}}
    \,\priorFunc\!\left\{\frac{l\lambda}{1+(l-1)\lambda}\right\}    d\lambda, \label{eq:Jl}
  \end{equation}
  where $\nu=\d/2+a+1$, $B(\cdot,\cdot)$ denotes the Beta function and
  $w_l=l\|\xmean_l\|^2/\s_l$.
\end{lemma}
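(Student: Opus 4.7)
The plan is to invoke Theorem~\ref{th:expr_unknownVar}, which reduces the task to computing $\vmpi(\bar{x}_l,s_l;l) = s_l\,m_\pi(\bar{x}_l,s_l;l)$ explicitly for the prior $\ourprior$ at $l=\n$ and $l=\n+1$. Substituting $\ourprior$ from \eqref{eq:us} into the definition of $m_\pi(\z,v;l)$ and invoking Fubini to exchange the order of integration yields a triple integral over $(\Mu,\eta,\lambda)$ that can be evaluated sequentially.

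First, the $\Mu$-integral is a Gaussian--Gaussian convolution: integrating the product $\phi(\z;\Mu,l^{-1}\eta^{-1})\,\phi\{\Mu;0,\eta^{-1}\lambda^{-1}(1-\lambda)\}$ with respect to $\Mu$ produces a centred Gaussian in $\z$ with variance $\eta^{-1}\{l-(l-1)\lambda\}/(l\lambda)$. Next, the $\eta$-integral combines this Gaussian factor, the chi-square density in $v$, and the $\eta^a$ factor from $\pi(\eta,\lambda)$; collecting powers gives a Gamma integral of shape $(l-1)\d/2+\nu$ with $\nu=\d/2+a+1$, producing a factor $\Gamma\{(l-1)\d/2+\nu\}$ and a power of $B/2$, where $B=\|\z\|^2 l\lambda/\{l-(l-1)\lambda\}+v$.

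Setting $(\z,v)=(\bar{x}_l,s_l)$ and using $w_l=l\|\bar{x}_l\|^2/s_l$ to factor $s_l$ out of $B$, the remaining $\lambda$-integral is brought into the form of $J_l$ via the change of variables $\tau=\lambda/\{l-(l-1)\lambda\}$, equivalently $\lambda=l\tau/\{1+(l-1)\tau\}$, which maps $(0,1)$ to $(0,1)$. The key identities to check are $l-(l-1)\lambda = l/\{1+(l-1)\tau\}$, $w_l\lambda+l-(l-1)\lambda = l\{1+w_l\tau\}/\{1+(l-1)\tau\}$, together with the Jacobian $d\lambda=l\{1+(l-1)\tau\}^{-2}d\tau$. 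After collecting all the $l$-dependent factors in the nontrivial part of the integrand (which combine to $l^\nu$), one obtains
\[
  \vmpi(\bar{x}_l,s_l;l) = C\,l^\nu\,s_l^{-\nu}\,\frac{\Gamma\{(l-1)\d/2+\nu\}}{\Gamma\{(l-1)\d/2\}}\,J_l,
\]
with $C$ a constant independent of $l$.

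Taking the ratio at $l=\n+1$ and $l=\n$, the $l^\nu$ terms produce $(1+1/\n)^\nu$, the $s_l^{-\nu}$ terms produce $(\sn1/\snn)^{-\nu}$, and the Gamma quotient collapses to $B\{\nu,(\n-1)\d/2\}/B(\nu,\n\d/2)$ via $B(a,b)=\Gamma(a)\Gamma(b)/\Gamma(a+b)$, yielding exactly $\constUs$. Combining with Theorem~\ref{th:expr_unknownVar} gives the announced formula. The argument is essentially careful bookkeeping; the one place to be cautious is the substitution in the $\lambda$-integral, where one must verify that the powers of $\{l-(l-1)\lambda\}$ coming from the Gaussian prefactor and from $B^{-(l-1)\d/2-\nu}$ combine so as to reproduce exactly the exponent $\d/2-\nu-1$ of $\{1+(l-1)\tau\}$ appearing in $J_l$.
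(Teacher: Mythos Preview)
Your proposal is correct and follows the same route the paper sets up: invoke Theorem~\ref{th:expr_unknownVar} to reduce to the ratio $\vmpi(\xn1,\sn1;\n+1)/\vmpi(\xnn,\snn;\n)$, then evaluate $\vmpi$ for $\ourprior$ by Gaussian convolution in $\Mu$, a Gamma integral in $\eta$, and the substitution $\tau=\lambda/\{l-(l-1)\lambda\}$ in the remaining $\lambda$-integral. The bookkeeping you outline (powers of $l$, of $s_l$, of $\{1+(l-1)\tau\}$, and the Gamma-to-Beta reduction) is exactly right and matches the computation in the supplementary material.
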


\begin{lemma}\label{lem:risk_1}
  Let $\n\geq 2$, $0<\nu<1$, and $\priorFunc$ be specified by \eqref{eq:choicePi}. Then 
  \begin{equation*}
    \esp_{(\Mu,\eta)}\!\left(\log \left[\frac{B\{\nu,(\n-1)\d/2\}}{B(\nu,\n\d/2)}    \frac{\JN}{\JD}\frac{(1-\uD)^\nu}{(1-\uN)^\nu}\right] \right)    \geq \nu\, g(\n,\d,\nu) \esp(1-\uN),
  \end{equation*} 
  where $u_l=w_l/(1+w_l)$, $l\in\{\n,\n+1\}$, the expectation $\esp(\cdot)$ is taken with respect to a noncentral Beta random variable, and the function $g$ is defined by
  \begin{equation}\label{eq:gndnu}
    g(\n,\d,\nu)=
    \begin{cases}
    \displaystyle\frac{1}{\nu}\log     \frac{B\{\nu,\n\d/2-(\d-\nu)/\n\}}{B(\nu,\n\d/2)} ,
    & \d\leq \n, \\ 
    \displaystyle{\frac{1}{\n\nu}\,\frac{\d-\nu}{\n\d/2-1}}\log(1+\nu), & \d\geq \n+1.
    \end{cases}
  \end{equation}
\end{lemma}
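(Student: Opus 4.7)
The plan is to exploit the specific algebraic form of $\priorFunc$ in \eqref{eq:choicePi}, which was chosen precisely so that $J_n$ collapses to a closed form. Substituting $t=l\lambda/\{1+(l-1)\lambda\}$ into $\priorFunc(t)$ gives $1-t=(1-\lambda)/\{1+(l-1)\lambda\}$, and crucially for $l=n$, $1-(n-1)t/n=1/\{1+(n-1)\lambda\}$. The $\{1+(n-1)\lambda\}$-factors then cancel exactly against those in the kernel of \eqref{eq:Jl}, and after the change of variable $\mu=1-\lambda$ the remaining integral is a Gauss hypergeometric one whose first and third parameters coincide; Euler's identity ${}_2F_1(a,b;a;z)=(1-z)^{-b}$ then yields
\[
J_n=B\{(n-1)d/2,\nu\}(1-u_n)^\nu.
\]
Substituting back, the ratio inside the logarithm of the statement telescopes to $J_{n+1}/\{B(\nu,nd/2)(1-u_{n+1})^\nu\}$, eliminating the $u_n$-dependence entirely, so the claim reduces to a one-variable inequality in $u_{n+1}$.

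For $l=n+1$ the same procedure uses $1-(n-1)t/n=(n+\lambda)/\{n(1+n\lambda)\}$ and leaves a residual factor $(1+\lambda/n)^{-(n-2)d/2-\nu}$, giving
\[
J_{n+1}=\int_0^1 \lambda^{\nu-1}(1-\lambda)^{(n-1)d/2-1}(1+w_{n+1}\lambda)^{-nd/2-\nu}(1+\lambda/n)^{-(n-2)d/2-\nu}d\lambda.
\]
I would lower-bound this residual factor so as to bring $J_{n+1}$ into a form amenable to Euler's identity once more. The elementary inequality $(1+\lambda/n)^{-\alpha}\ge (1-\lambda)^{\alpha/n}$ on $[0,1]$, verifiable by showing that $\varphi(\lambda)=(1+\lambda/n)(1-\lambda)^{1/n}$ is monotone decreasing from $\varphi(0)=1$, applied with $\alpha=(n-2)d/2+\nu$, converts the $(1-\lambda)$-exponent in the integrand to the magic value $nd/2-(d-\nu)/n-1$. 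A second Euler collapse then produces a closed-form lower bound for $J_{n+1}$ as a product of $(1-u_{n+1})^\nu$, a Beta function, and a hypergeometric factor admitting a power-series expansion in $(1-u_{n+1})$.

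Taking logarithms and keeping the leading term of this expansion yields the pointwise lower bound $\log\Phi(u_{n+1})\ge \nu g(n,d,\nu)(1-u_{n+1})$, where $\Phi(u)=J_{n+1}(u)/\{B(\nu,nd/2)(1-u)^\nu\}$. The two regimes of $g$ in \eqref{eq:gndnu} correspond to two ways of extracting this leading coefficient: for $d\le n$ the Beta ratio $B\{\nu,nd/2-(d-\nu)/n\}/B(\nu,nd/2)$ appears directly from evaluating the collapsed integral, matching the first case; for $d\ge n+1$ the elementary bound above becomes too lossy as $\alpha$ grows, and one must combine it with a sharper local estimate of the residual factor, the $\log(1+\nu)$-structure then emerging from a Taylor expansion of the hypergeometric factor around the Beta mean and the identity $E_\lambda(\lambda)=\nu/\{(n-1)d/2+\nu\}$. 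Finally, taking expectation with respect to the noncentral Beta distribution of $u_{n+1}$ preserves linearity in $(1-u_{n+1})$ and produces the advertised bound.

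The main obstacle is calibrating the bound on the residual factor in each regime so that the resulting constant reproduces $g(n,d,\nu)$ exactly rather than only a weaker multiple. The case split at $d=n$ vs.\ $d=n+1$ is genuine, since $(1+\lambda/n)^{-\alpha}\ge (1-\lambda)^{\alpha/n}$ is sharp on the essential support of the Beta kernel only when $\alpha=(n-2)d/2+\nu$ is small. A secondary subtlety is verifying that the linear-in-$(1-u_{n+1})$ structure of the pointwise bound is not degraded by averaging against the noncentral Beta law of $u_{n+1}$, which has nontrivial mass near both endpoints.
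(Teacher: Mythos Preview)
Your opening is exactly right and is the whole point of the choice \eqref{eq:choicePi}: substituting $t=n\lambda/\{1+(n-1)\lambda\}$ into $\priorFunc$ makes every $\{1+(n-1)\lambda\}$-factor cancel and Euler's identity gives $J_n=B\{\nu,(n-1)d/2\}(1-u_n)^\nu$ exactly, so the quantity inside the logarithm reduces to $\Phi(u_{n+1})=J_{n+1}/\{B(\nu,nd/2)(1-u_{n+1})^\nu\}$. Your expression for $J_{n+1}$ and the inequality $(1+\lambda/n)^{-\alpha}\ge(1-\lambda)^{\alpha/n}$ are also correct, and this is precisely what produces the exponent $nd/2-(d-\nu)/n$ visible in the first branch of \eqref{eq:gndnu}; so for $d\le n$ your route coincides with the paper's.

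The gap is in the last mile. After the second Euler step you do not get a full collapse: you get $\Phi(u)\ge \{B(\nu,\beta)/B(\nu,nd/2)\}\,{}_2F_1(-(d-\nu)/n,\nu;\nu+\beta;u)$ with $\beta=nd/2-(d-\nu)/n$. This equals $e^{\nu g}$ at $u=0$ and $1$ at $u=1$, so the pointwise bound $\log\Phi(u)\ge\nu g\,(1-u)$ is \emph{equivalent} to log-concavity of the right-hand side in $u$. ``Keeping the leading term of the expansion'' does not give this; you need an honest concavity/interpolation argument, which you have not supplied. For $d\ge n+1$ the gap is larger: your own closing paragraph concedes that the elementary bound is too lossy and that ``calibrating'' to recover the constant $\frac{d-\nu}{n(nd/2-1)}\log(1+\nu)$ is the main obstacle. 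Neither the ``sharper local estimate'' nor the ``Taylor expansion around the Beta mean'' is specified, and the factor $\log(1+\nu)$ does not emerge from anything written; note also that the denominator $nd/2-1$ in \eqref{eq:gndnu} is not the $(n-1)d/2+\nu$ you quote, so the heuristic you sketch is not pointing at the right objects. In short: the reduction to a one-variable problem in $u_{n+1}$ is correct and is the paper's approach, but the actual inequality $\log\Phi(u)\ge\nu g(n,d,\nu)(1-u)$ still needs proof in both regimes.
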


\begin{lemma}\label{lem:risk_2}
  Let $\n\geq 2$ and $\priorFunc$ be specified by equation~\eqref{eq:choicePi}. Then,
  \begin{equation*}
    \esp_{(\Mu,\eta)}\!\left\{\log\left(\frac{\n+1}{\n}\,     \frac{\snn}{\sn1}\, \frac{1-\uN }{1-\uD}\right) \right\} 
    \geq - h(\n,\d) \esp(1-\uN),
  \end{equation*} 
  where $u_l=w_l/(1+w_l)$, $l\in\{\n,\n+1\}$, the expectation $\esp(\cdot)$ is taken with respect to a noncentral Beta random variable, and the function $h$ is defined by
  \begin{equation}\label{eq:hnd}
    h(\n,\d)=\frac{1+(\n+1)\d/2}{\n\d/2}\left[\psi\{1+(\n+1)\d/2\}-
    \psi(1+\n\d/2)\right],
  \end{equation}
  $\psi(x)=d\log\Gamma(x)/dx$ being the digamma function.
\end{lemma}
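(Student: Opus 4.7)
The plan starts with a dramatic algebraic reduction. Observing that $\snn+\n\|\xnn\|^2 = \sum_{i=1}^\n \|\x_i\|^2 =: T_\n$ and, on setting $\x_{\n+1}:=\y$, the analogous identity $\sn1 + (\n+1)\|\xn1\|^2 = T_\n + \|\y\|^2 =: T_{\n+1}$, one sees that $1-u_l = \s_l/T_l$ by definition of $u_l$, so the three factors inside the logarithm collapse:
\begin{equation*}
\frac{\n+1}{\n}\cdot\frac{\snn}{\sn1}\cdot\frac{1-\uN}{1-\uD} = \frac{\n+1}{\n}\cdot\frac{T_\n}{T_{\n+1}}.
\end{equation*}
The lemma therefore reduces to bounding $\log\{(\n+1)/\n\} - \esp_{(\Mu,\eta)}[\log(T_{\n+1}/T_\n)]$ from below by $-h(\n,\d)\esp(1-\uN)$.

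Next I would exploit the distributional structure. Crucially $T_{\n+1} = T_\n + \|\y\|^2$ with $T_\n\perp\|\y\|^2$, and $T_l/\sigma^2\sim\chi^2_{l\d}(l\xi)$, $\|\y\|^2/\sigma^2\sim\chi^2_\d(\xi)$, where $\xi=\eta\|\Mu\|^2$. Using the Poisson-mixture representation of the non-central chi-square, introduce independent $M\sim\text{Poi}(\n\xi/2)$ and $N\sim\text{Poi}(\xi/2)$, with $L:=M+N\sim\text{Poi}\{(\n+1)\xi/2\}$, so that conditionally $T_\n/\sigma^2\sim\chi^2_{\n\d+2M}$ and $T_{\n+1}/\sigma^2\sim\chi^2_{(\n+1)\d+2L}$. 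Separately, $1-\uN = \sn1/T_{\n+1}$ with $\sn1/\sigma^2\sim\chi^2_{\n\d}$ (central) independent of $(\n+1)\|\xn1\|^2/\sigma^2\sim\chi^2_\d\{(\n+1)\xi\}$ yields $\esp(1-\uN\mid L) = (\n\d/2)/\{(\n+1)\d/2 + L\}$ (since the latent Poisson governing this second mixture has the same law $\text{Poi}\{(\n+1)\xi/2\}$ as $L$). Applying $\esp[\log\chi^2_k] = \log 2 + \psi(k/2)$ then rewrites the LHS as
\begin{equation*}
\log\frac{\n+1}{\n} + \esp[\psi(\n\d/2+M)] - \esp[\psi\{(\n+1)\d/2+L\}].
\end{equation*}

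The heart of the proof is then a pointwise-in-$L$ digamma inequality of the form
\begin{equation*}
\psi\{(\n+1)\d/2+L\} - \esp[\psi(\n\d/2 + M)\mid L] - \log\frac{\n+1}{\n} \leq h(\n,\d)\cdot\frac{\n\d/2}{(\n+1)\d/2+L}.
\end{equation*}
Since $M\mid L\sim\text{Bin}\{L,\n/(\n+1)\}$ with $\esp(M\mid L) = \n L/(\n+1)$, a concavity/Jensen argument for $\psi$ reduces this to a deterministic inequality in $L$ involving digamma values at integer-shifted arguments; the recursion $\psi(1+x)=\psi(x)+1/x$ accounts precisely for the shift-by-one appearing in the factor $\psi\{1+(\n+1)\d/2\} - \psi(1+\n\d/2)$ of $h(\n,\d)$. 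The integral representation $\psi(y)-\psi(x) = \int_0^\infty (e^{-xt}-e^{-yt})/(1-e^{-t})\,dt$, combined with convexity of $t\mapsto 1/(t+x)$, would then close the deterministic estimate.

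The principal obstacle is the tightness of the constant $h(\n,\d)$: naive bounds such as $\psi(y)-\psi(x)\le\log(y/x)$ miss the prefactor $\{1+(\n+1)\d/2\}/(\n\d/2)$, so the proof must exploit the precise structure of $\psi$ at integer shifts together with the binomial representation of $M\mid L$. Handling the Jensen correction between $M$ and $L$ without losing tightness is the most delicate book-keeping; this is likely where the standing assumption $\n\ge 2$ enters, by ensuring integrability of the inverse moments of the non-central chi-squared distributions implicit in the coupling.
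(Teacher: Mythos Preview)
Your algebraic reduction is correct and clean: the identity $1-u_l=s_l/(s_l+l\|\bar x_l\|^2)=s_l/T_l$ with $T_l=\sum_{i\le l}\|x_i\|^2$ indeed collapses the argument of the logarithm to $(n+1)T_n/(nT_{n+1})$, and the Poisson-mixture bookkeeping you describe (including $M\mid L\sim\mathrm{Bin}\{L,n/(n+1)\}$ and $\esp(1-\uN\mid L')=(nd/2)/\{(n+1)d/2+L'\}$ with $L'\stackrel{d}{=}L$) is accurate. The paper relegates the proof of this lemma to its supplementary material, so a line-by-line comparison is not possible here, but the explicit form of $h(n,d)$ strongly suggests the authors also pass through digamma differences of exactly the kind you isolate, so your reduction is almost certainly on the intended track.

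The genuine gap is in your closing step. You assert that ``a concavity/Jensen argument for $\psi$ reduces this to a deterministic inequality in $L$'', but Jensen applied to the concave $\psi$ yields $\esp[\psi(nd/2+M)\mid L]\le\psi\{nd/2+nL/(n+1)\}$, which is an \emph{upper} bound on the conditional expectation and hence a \emph{lower} bound on your left-hand side $\psi\{(n+1)d/2+L\}-\esp[\psi(nd/2+M)\mid L]-\log\{(n+1)/n\}$. That is the wrong direction for the inequality you need. Writing $A=(n+1)d/2+L$, this Jensen lower bound equals $\psi(A)-\psi\{nA/(n+1)\}-\log\{(n+1)/n\}$, which is strictly positive because $x\mapsto\psi(x)-\log x$ is increasing on $(0,\infty)$; so it cannot even be used to show smallness of the left-hand side. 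The integral representation of $\psi(y)-\psi(x)$ you invoke does not repair this, since what is actually required is a \emph{lower} bound on $\esp[\psi(nd/2+M)\mid L]$, i.e.\ control of the lower tail of $M\mid L$, not a convexity bound. To close the argument you would need either a quantitative lower bound on that conditional expectation (for instance a second-order expansion with explicit remainder, exploiting $\mathrm{Var}(M\mid L)=nL/(n+1)^2$ and $|\psi''(x)|=O(x^{-2})$, which asymptotically gives the right $1/(nA)$ rate matching $h(n,d)\cdot(nd/2)/A$) or to abandon the pointwise-in-$L$ strategy and work directly with the unconditional expectations as functions of the noncentrality parameter.
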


\begin{lemma}\label{lem:nu_zero}
  Let $\n=2$ and $1\leq \d \leq 4$. Then, there exists a constant $\nu_*$ depending only on the dimension $\d$ such that, for any $0<\nu\leq \nu_*$,
  \begin{equation*}
    g(\n,\d,\nu) - h(\n,\d)\geq 0.
  \end{equation*}
\end{lemma}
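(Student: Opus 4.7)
The plan is a continuity-at-the-boundary argument: compute $\lim_{\nu\downarrow 0} g(2,\d,\nu)$ explicitly for each $\d\in\{1,2,3,4\}$, verify strict inequality against $h(2,\d)$ at $\nu=0$, and then use continuity of $g(2,\d,\cdot)$ in $\nu$ to extract a $\nu_*>0$.

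For the high-branch cases $\d\in\{3,4\}$, the second line of \eqref{eq:gndnu} at $\n=2$ simplifies to $g(2,\d,\nu) = (\d-\nu)\log(1+\nu)/\{2\nu(\d-1)\}$, whose limit at $\nu=0$ is $\d/\{2(\d-1)\}$, giving $3/4$ for $\d=3$ and $2/3$ for $\d=4$. For the low-branch cases $\d\in\{1,2\}$, the first line gives $g(2,\d,\nu) = \nu^{-1}\log\{B(\nu,(\d+\nu)/2)/B(\nu,\d)\}$. Using the expansions $\log\Gamma(\nu)=-\log\nu-\gamma\nu+O(\nu^2)$ and $\log\Gamma(y+a\nu)=\log\Gamma(y)+a\nu\,\psi(y)+O(\nu^2)$, the singular $-\log\nu$ pieces cancel in the Beta ratio, and one extracts $g(2,1,0^+) = 2\log 2$ via $\psi(1/2) = -\gamma-2\log 2$, and $g(2,2,0^+) = 1$ via $\psi(1) = -\gamma$ together with the simplification $B(\nu,2)=1/\{\nu(\nu+1)\}$.

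Next I would evaluate $h(2,\d)$ exactly, using the closed forms $\psi(m+1) = -\gamma + \sum_{k=1}^{m} 1/k$ and $\psi(m+1/2) = -\gamma - 2\log 2 + 2\sum_{k=1}^{m} 1/(2k-1)$. The four pointwise inequalities at $\nu=0$ then reduce to the elementary checks $7\log 2 > 25/6$ for $\d=1$, $1 > 2/3$ for $\d=2$, $9/22 > 5/6 + 2/5 + 2/7 + 2/9 - 2\log 2$ for $\d=3$, and $2/3 > 77/120$ for $\d=4$, all verifiable using only that $\log 2 > 0.69$. Continuity of $g(2,\d,\cdot)$ from the right at $0$ then yields a dimension-dependent constant $\nu_*>0$ with $g(2,\d,\nu)-h(2,\d)\ge 0$ for all $\nu\in(0,\nu_*]$.

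The main obstacle is the $\d=4$ case, where the limit gap is only $2/3 - 77/120 = 1/40$; this controls how large $\nu_*$ can be and is consistent with the authors' numerical value $\nu_*\approx 0.05$. A secondary care-point is the bookkeeping of the $O(\nu)$ coefficients in the $\d\le 2$ expansion: the singular $\log\Gamma(\nu)$ terms cancel between numerator and denominator, but a missed $\gamma\nu$ contribution from one of the other gamma factors would corrupt the limit, so I would track all first-order terms symbolically before substituting numerical dimensions.
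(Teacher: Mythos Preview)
Your approach is correct. The limits you compute are right: for $d\in\{1,2\}$ the $\Gamma(\nu)$ factors cancel exactly in the Beta ratio, leaving
\[
\lim_{\nu\downarrow 0} g(2,d,\nu)=\psi(d)-\psi(d/2),
\]
which gives $2\log 2$ and $1$; for $d\in\{3,4\}$ the limit $d/\{2(d-1)\}$ is immediate from $\log(1+\nu)/\nu\to 1$. Your values of $h(2,d)$ and the four strict inequalities at $\nu=0^+$ all check out numerically, and since $g(2,d,\cdot)$ is smooth on $(0,1)$ with the stated right-limit, the continuity argument delivers the required $\nu_*>0$.

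A direct comparison with the paper's argument is not possible from the source provided: the authors defer the proof of this lemma to the Supplementary Material, so only the statement appears in the main text. That said, the structure of the paper strongly suggests they follow the same route. The numerical values they quote for $\nu_*$ (about $0.25$, $0.33$, $0.18$, $0.05$ for $d=1,2,3,4$) are precisely the roots of $g(2,d,\nu)=h(2,d)$, which is what one obtains by pushing your continuity argument to its sharp endpoint; in particular your observation that the $d=4$ gap $2/3-77/120=1/40$ is the tightest is exactly why their $\nu_*\approx 0.05$ is smallest there. So your plan is not merely valid but almost certainly coincides with the authors' own, the only possible difference being that they may also record the explicit crossing values rather than stopping at existence.
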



\bibliographystyle{apalike}

\end{document}